\title[Propagation of the mono-kinetic solution in the collective dynamics]{Propagation of the mono-kinetic solution in the Cucker-Smale-type kinetic equations}
\author[Kang]{Moon-Jin Kang}
\address[Moon-Jin Kang]{\newline Department of Mathematics  and Research Institute of Natural Sciences
\newline Sookmyung Women's University, Seoul  04310, Korea}
\email{moonjinkang@sookmyung.ac.kr}
\author[Kim]{Jeongho Kim}
\address[Jeongho Kim]{\newline Institute of New Media and Communications \newline Seoul National University, Seoul  08826, Korea}
\email{jhkim206@snu.ac.kr}
\begin{document}
\newtheorem{theorem}{Theorem}[section]
\newtheorem{lemma}{Lemma}[section]
\newtheorem{corollary}{Corollary}[section]
\newtheorem{proposition}{Proposition}[section]
\newtheorem{remark}{Remark}[section]
\newtheorem{definition}{Definition}[section]

\renewcommand{\theequation}{\thesection.\arabic{equation}}
\renewcommand{\thetheorem}{\thesection.\arabic{theorem}}
\renewcommand{\thelemma}{\thesection.\arabic{lemma}}
\newcommand{\bbr}{\mathbb R}
\newcommand{\bbz}{\mathbb Z}
\newcommand{\bbn}{\mathbb N}
\newcommand{\bbs}{\mathbb S}
\newcommand{\bbp}{\mathbb P}
\newcommand{\bbt}{\mathbb T}
\newcommand{\<}{\langle}
\renewcommand{\>}{\rangle}
\newcommand{\T}{\mathbb{T}}
\newcommand{\N}{\mathbb{N}}
\newcommand{\R}{\mathbb{R}}
\newcommand{\lt}{\left}
\newcommand{\rt}{\right}
\newcommand{\bq}{\begin{equation}}
\newcommand{\eq}{\end{equation}}
\newcommand{\beq}{\begin{equation}}
\newcommand{\eeq}{\end{equation}}
\newcommand{\e}{\varepsilon}
\newcommand{\mc}{\mathcal{C}}
\newcommand{\pa}{\partial}

\subjclass[2010]{35Q35, 35Q70} 

\keywords{Hydrodynamic equations, kinetic equation, mono-kinetic solution, the Cucker-Smale model, the thermomechanical Cucker-Smale model}

\thanks{\textbf{Acknowledgment.} The work of  M.-J. Kang is partially supported by the NRF-2019R1C1C1009355. The work of J. Kim was supported by the Basic Research Lab Program through the National Research Foundation of Korea (NRF) funded by the MSIT(2018R1A4A1059976).}

\begin{abstract} 
	In this paper, we study the propagation of the mono-kinetic distribution in the Cucker-Smale-type kinetic equations. More precisely, if the initial distribution is a Dirac mass for the variables other than the spatial variable, then we prove that this ``mono-kinetic" structure propagates in time. For that, we first obtain the stability estimate of measure-valued solutions to the kinetic equation, by which we ensure the uniqueness of the mono-kinetic solution in the class of measure-valued solutions with compact supports. We then show that the mono-kinetic distribution is a special measure-valued solution. The uniqueness of the measure-valued solution implies the desired propagation of mono-kinetic structure.
\end{abstract}

\maketitle

\section{Introduction}\label{sec:1}
\setcounter{equation}{0}

The collective dynamics is one of the most interesting phenomena that can be found in the nature and society. The flocking of birds or the flow of pedestrians are the best examples of such phenomena. For decades, the models in the collective dynamics, such as the Vicsek model \cite{V-C-B-C-S} or the Cucker-Smale (in short, C-S) model \cite{CS07} have been studied extensively. These models were started from the microscopic model, which describes the dynamics of the position and velocity of each single particle, interacting with the other particles. Moreover, inspired by the kinetic theory of molecular gases and fluid dynamics, the mesoscopic and macroscopic descriptions for the models were developed \cite{H-Liu,H-T} for describing the dynamics when the number of agents is very large. More precisely, mesoscopic and macroscopic descriptions for the C-S model are respectively presented as follows \cite{H-T}:
\begin{align}
\begin{aligned}\label{eq_kinet_CS}
&\partial_t f+ v\cdot \nabla_x f+ \nabla_v \cdot [F[f]f]=0,\quad (t,x,v)\in \bbr_+\times\bbt^d\times\bbr^d,\\
&F[f](t,x,v):=\int_{\bbt^d\times\bbr^{d}}\phi(x-x_*)(v_*-v)f(t,x_*,v_*)\,dx_*\,dv_*;
\end{aligned}
\end{align}
\begin{align}
\begin{aligned}\label{eq_hydro_CS}
&\partial_t \rho +\nabla_x \cdot (\rho u) =0,\quad (t,x)\in \bbr_+\times\bbt^d,\\
&\partial_t (\rho u) +\nabla_x\cdot(\rho u \otimes u)=\rho\int_{\bbt^d} \phi(x-x_*)(u(x_*)-u(x))\rho(x_*)\,dx_*.
\end{aligned}
\end{align}
The hydrodynamic C-S equations \eqref{eq_hydro_CS} can be formally derived from the kinetic C-S equation \eqref{eq_kinet_CS} by adopting the mono-kinetic ansatz:
\[f(t,x,v)=\rho(t,x) \otimes \delta_{u(t,x)}(v),\]
where $\delta_u(v)$ denotes the Dirac mass concentrated at $u$. For a rigorous derivation from \eqref{eq_kinet_CS} to \eqref{eq_hydro_CS}, we refer to \cite{F-K}, in which the hydrodynamic limit of \eqref{eq_kinet_CS} with a strong local alignment was rigorously proved. \\

On the other hand, the C-S model was generalized to the thermomechanical Cucker-Smale (TCS) model that takes into account the effect of the internal variables, such as temperature \cite{HR17}. The kinetic and hydrodynamic systems for the TCS model are respectively given by
\begin{align}
\begin{aligned}\label{eq_kinet_TCS}
&\partial_t f+ v\cdot \nabla_x f+ \nabla_v \cdot [F[f]f]+\partial_\theta [G[f]f]=0,\quad (t,x,v,\theta)\in \bbr_+\times\bbt^d\times\bbr^d\times\bbr_+,\\
&F[f](t,x,v,\theta)=\int_{\bbt^d\times\bbr^{d}\times\bbr_+}\phi(x-x_*)\left(\frac{v_*}{\theta_*}-\frac{v}{\theta}\right)f(t,x_*,v_*,\theta_*)\,dx_*\,dv_*\,d\theta_*,\\
&G[f](t,x,\theta)=\int_{\bbt^d\times\bbr^{d}\times\bbr_+}\zeta(x-x_*)\left(\frac{1}{\theta}-\frac{1}{\theta_*}\right)f(t,x_*,v_*,\theta_*)\,dx_*\,dv_*\,d\theta_* ,\\
\end{aligned}
\end{align}
subject to the initial data $f(0,x,v,\theta)=f_0(x,v,\theta)$, and 
\begin{align}
\begin{aligned}\label{eq_hydro_TCS}
&\partial_t \rho +\nabla_x \cdot (\rho u) =0,\quad (t,x)\in \bbr_+\times\bbt^d,\\
&\partial_t (\rho u) +\nabla_x\cdot(\rho u \otimes u)=\rho\int_{\bbt^d} \phi(x-x_*)\left(\frac{u(x_*)}{e(x_*)}-\frac{u(x)}{e(x)}\right)\rho(t,x_*)\,dx_*,\\
&\partial_t(\rho e)+\nabla_x\cdot(\rho u e)=\rho \int_{\bbt^{d}}\zeta(x-x_*)\left(\frac{1}{e(x)}-\frac{1}{e(x_*)}\right)\rho(t,x_*)\,dx_* ,
\end{aligned}
\end{align}
subject to the initial data $\rho(0,x):=\rho_0(x)$, $u(0,x):=u_0(x)$ and $e(0,x):=e_0(x)$ respectively.\\

For a rigorous study on the hydrodynamic limit of the kinetic equation \eqref{eq_kinet_TCS}, we refer to \cite{KHKS}. There, they proved a hydrodynamic limit of \eqref{eq_kinet_TCS} with a strong local alignment towards \eqref{eq_hydro_CS}, by considering the temperature support of the initial data $f_0$ degenerating to a single value as the scaling parameter tends to 0. However, the hydrodynamic limit from \eqref{eq_kinet_TCS} toward \eqref{eq_hydro_TCS} is still an open and challenging problem. 
The main difficulties in the limit process from \eqref{eq_kinet_TCS} to \eqref{eq_hydro_TCS} are due to the severe singularity of the mono-kinetic distribution, and the strong nonlinearity of the nonlocal interaction. 
For the other results on these kinds of singular limit leading to the mono-kinetic distribution, we refer to \cite{JR,K18,KV}.\\

However, for a rigorous justification on the mono-kinetic ansatz, it is natural to ask the following question; does the solution $f$ of the kinetic equation with the mono-kinetic initial data $f_0$ preserves the mono-kinetic property? More precisely, if the initial data $f_0$ is given by
\[f_0(x,v,\theta)=\rho_0(x)\otimes\delta_{u_0(x)}(v)\otimes \delta_{e_0(x)}(\theta),\quad (\mbox{resp. }f_0(x,v)=\rho_0(x)\otimes\delta_{u_0(x)}(v) ~\mbox{for the C-S model}),\]
then, does the solution $f$ is also of the mono-kinetic form given by
\[
f(t,x,v,\theta)=\rho(t,x)\otimes\delta_{u(t,x)}(v)\otimes \delta_{e(t,x)}(\theta), \quad (\mbox{resp. } f(t,x,v)=\rho(t,x)\otimes\delta_{u(t,x)}(v)~\mbox{for the C-S model})
\]
for some functions $\rho(t,x)$, $u(t,x)$ and $e(t,x)$? Note that considering the derivation of hydrodynamic equations, $\rho,u$ and $e$ should be given as the solutions of the hydrodynamic system \eqref{eq_hydro_TCS}. \\

In this article, we aim to give a rigorous answer to the above question, by obtaining the stability and uniqueness of the mono-kinetic solution in some class. \\
We will only focus on the TCS models \eqref{eq_kinet_TCS} and \eqref{eq_hydro_TCS} for the above question, because the same result also holds in the simpler case of the C-S model.

\section{Preliminaries and main theorems}\label{sec:2}
\setcounter{equation}{0}
In this section, we provide the basic definitions, previous results and the main theorem of this paper.

\subsection{Preliminaries}
We first provide the definitions of the measure-valued solutions and bounded Lipschitz distances, and we also present the existence and uniqueness of the smooth solution to the hydrodynamic equations. We define $\mathcal{M}(X)$ as a set of  nonnegative Radon measures defined on $X:=\bbt^d\times\bbr^d\times\bbr_+$. For a measure $\mu$ and $g\in C_0(X)$, we define
\[\<\mu,g\>:=\int_{X} g\,\mu(dz),\quad z\in X.\]

\begin{definition}\label{D2.1}\cite{HKMRZ-2}
	The time dependent measure $\mu=\mu_t\in L^\infty([0,T]; \mathcal{M}(X))$ is said to be a measure-valued solution to \eqref{eq_kinet_TCS} with the initial measure $\mu_0\in\mathcal{M}(X)$ if the following conditions hold:
	\begin{enumerate}
		\item $\mu$ is weakly continuous in time: for any $g\in C_0(X)$, the map $t\mapsto \<\mu_t,g\>$ is continuous.
		\item $\mu$ satisfies \eqref{eq_kinet_TCS} in the sense of distribution: for any $g\in C_0^1([0,T)\times X)$,
		\begin{align}
		\begin{aligned}\label{eq_measure}
		\<\mu_t,g(\cdot,t)\>-\<\mu_0,g(\cdot,0)\>&=\int_0^t \<\mu_s,\partial_sg+v\cdot \nabla_x g+F[\mu_s]\cdot \nabla_v g+G[\mu_s]\partial_\theta g\>\,ds,
		\end{aligned}
		\end{align}
		where $F[\mu_t](z)$ and $G[\mu_t](x,\theta)$ are defined as
		\begin{align}
		\begin{aligned}\label{FG}
		&F[\mu_t](x,v,\theta):=\int_{X}\phi(x-x_*)\left(\frac{v_*}{\theta_*}-\frac{v}{\theta}\right)\,\mu_t(dz_*),\\
		&G[\mu_t](x,\theta):=\int_{X}\zeta(x-x_*)\left(\frac{1}{\theta}-\frac{1}{\theta_*}\right)\,\mu_t(dz_*).
		\end{aligned}
		\end{align}
	\end{enumerate}
\end{definition}

\vspace{0.3cm}

\noindent We now consider the following subset $\Omega$ of bounded and Lipschitz continuous functions
\[\Omega:=\left\{g:X\to\bbr~|~\|g\|_{L^\infty}\le 1,\quad \|g\|_{\textup{\textup{Lip}}}\le1\right\}.\]
Then, for any two measures $\mu,\nu$ on $X$, we define the bounded Lipschitz distance $d(\mu,\nu)$ as
\[d(\mu,\nu):=\sup_{g\in\Omega}\left|\int_{X}g(z)(\mu-\nu)(dz)\right|.\]
It is well-known that for any bounded and Lipschitz continuous function $g\in C_0(X)$, 
\begin{equation}\label{B-1}
\left|\int_{X}g(z)(\mu-\nu)(dz)\right|\le \max\{\|g\|_{L^\infty},\|g\|_{\textup{Lip}}\}d(\mu,\nu).
\end{equation}

In the following, we present the global well-posedness of the hydrodynamic model \eqref{eq_hydro_TCS}.
\begin{proposition}\cite{HKMRZ-1} \label{prop:smooth}
	 Let $s>\frac{d}{2}+1$. Suppose that 
	  \begin{equation}\label{ini-htcs}
	 (\rho_0,u_0,e_0)\in H^s(\bbt^d)\times H^{s+1}(\bbt^d)\times H^{s+1}(\bbt^d),\quad\mbox{together with some smallness condition}.
	  \end{equation}
	  Then, there exists a unique classical solution $(\rho,u,e)$ to \eqref{eq_hydro_TCS} satisfying
	 \[\rho\in C^0(0,\infty;H^s(\bbt^d))\cap C^1(0,\infty;H^{s-1}(\bbt^d)),\quad u\in C^0(0,\infty;H^{s+1}(\bbt^d))\cap C^1(0,\infty;H^{s}(\bbt^d)),\] 
	\[e\in C^0(0,\infty;H^{s+1}(\bbt^d))\cap C^1(0,\infty;H^{s}(\bbt^d)).\]
\end{proposition}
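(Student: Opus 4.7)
The plan is to prove local well-posedness by a Friedrichs/iteration argument in $H^s$, then bootstrap to global existence using the dissipative structure of the alignment and thermalization forces together with the smallness hypothesis. The first step is to rewrite \eqref{eq_hydro_TCS} in non-conservative form for the unknowns $(\rho,u,e)$, obtaining a continuity equation for $\rho$, a transport equation with nonlocal forcing for $u$, namely $\partial_t u + (u\cdot\nabla_x)u = \int_{\bbt^d}\phi(x-x_*)\bigl(u(x_*)/e(x_*) - u(x)/e(x)\bigr)\rho(x_*)\,dx_*$, and an analogous transport equation for $e$. This form exposes the pressureless-Euler-like character of the momentum equation and isolates the nonlocal terms, which are smoothing in $x$ by convolution.

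The second step is local existence. I would set up the linearized system by freezing $(u,e)$ in the transport operators and in the denominators, and in the nonlocal forcing, then prove uniform $H^s$ bounds for the iterates by standard energy estimates combined with Moser-type commutator estimates. Because there is no pressure, the estimate on $u$ proceeds through $\partial_t\|u\|_{H^{s+1}}^2 \lesssim \|\nabla u\|_{L^\infty}\|u\|_{H^{s+1}}^2$ plus forcing terms; the extra derivative on $u$ (working in $H^{s+1}$ with $\rho\in H^s$) is what allows the commutator $[\nabla^{s+1},u\cdot\nabla]u$ to be absorbed using the embedding $H^s\hookrightarrow L^\infty$ afforded by $s>d/2+1$. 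For the nonlocal terms one uses $\|\phi*g\|_{H^{s+1}}\lesssim \|\phi\|_{W^{s+1,1}}\|g\|_{L^\infty}$, so regularity of $\phi,\zeta$ passes cleanly to the forcing. Strict positivity of $e$ along the iteration is preserved on a short time interval by a continuity argument, so the nonlinearities $u/e$ and $1/e$ remain smooth maps on $H^{s+1}$. A contraction in a lower norm (say $L^2\times H^1\times H^1$) then provides a unique local classical solution on some $[0,T_*)$.

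The third step is the global extension. Here I would exploit the dissipative nature of the alignment and thermal coupling: the flow contracts the velocity and temperature fluctuations around their (conserved) mean values $\bar u$ and $\bar e$, and a Grönwall-type argument on the deviation $\|u-\bar u\|_{H^{s+1}}+\|e-\bar e\|_{H^{s+1}}+\|\rho-\bar\rho\|_{H^s}$ gives exponential decay provided the initial deviation is small enough. This simultaneously keeps $\inf e$ bounded away from zero, preventing blow-up of the $1/e$ nonlinearity, and keeps $\|\nabla u\|_{L^\infty}$ small, preventing the pressureless-Euler shock formation. The main obstacle, and the reason the smallness hypothesis in \eqref{ini-htcs} is unavoidable, lies precisely here: the unforced pressureless Euler part of the momentum equation is known to produce $\delta$-shocks in finite time, so global classical existence is possible only when the flocking damping strictly dominates the Burgers-type steepening; quantifying this balance with sharp Sobolev commutator constants and the lower bound on $e$ is the most delicate calculation of the argument.
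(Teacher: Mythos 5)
The first thing to note is that the paper does not prove this statement at all: Proposition \ref{prop:smooth} is imported verbatim from the reference \cite{HKMRZ-1} and is used as a black box (its only role in the paper is to guarantee that $(\rho,u,e)\in C^1$ so that the mono-kinetic measure in Section \ref{sec:4} is well defined). So there is no in-paper proof to compare against; the relevant comparison is with the strategy of the cited work, and your outline is indeed the standard one followed there: rewrite \eqref{eq_hydro_TCS} in non-conservative form, obtain local well-posedness by energy estimates with Moser/commutator bounds in $H^s\times H^{s+1}\times H^{s+1}$ (the asymmetry in regularity being exactly what lets $\rho\in H^s$ drive the nonlocal forcing of $u,e$ at the $H^{s+1}$ level, since one derivative can be unloaded onto $\phi$ or $\zeta$ in the convolution part $\phi*(\rho u/e)$), and then extend globally by showing that, under smallness of the initial fluctuations, the local damping term $-(u/e)(\phi*\rho)$ and its thermal analogue dominate the convective steepening and keep $e$ bounded away from zero.

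That said, as a proof your proposal has a genuine gap rather than merely compressed detail: the entire global step --- the only place where the smallness condition and the specific structure of the TCS coupling enter --- is described but not executed. Concretely, you do not (i) write down the closed differential inequality for the top-order energy of the deviations showing that the damping coefficient (proportional to $\inf\phi$, $\inf\zeta$, and the lower bound on $e$) beats the quadratic terms proportional to $\|\nabla u\|_{L^\infty}$; (ii) prove that $\inf_x e(t,x)$ stays uniformly positive for all time rather than merely on the local interval, which is circular with (i) since the damping constants degrade as $e$ grows or shrinks; or (iii) identify the correct asymptotic state (note that the coupling is in $u/e$ and $1/e$, so the conserved quantities are $\int\rho u$ and $\int\rho e$, and the fluctuations one must control are around density-weighted averages, not spatial means). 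These are precisely the ``delicate calculations'' you defer, and without them the argument does not establish global existence. One smaller point: your smoothing estimate $\|\phi*g\|_{H^{s+1}}\lesssim\|\phi\|_{W^{s+1,1}}\|g\|_{L^\infty}$ asks far more of $\phi$ than needed (and more than the Lipschitz regularity assumed elsewhere in the paper); since $\rho u/e\in H^{s}$, it suffices to transfer a single derivative to $\phi$, and the genuinely nontrivial term is the local one, $(u/e)(\phi*\rho)$, which is a product, not a convolution, and must be handled by Sobolev algebra estimates.
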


\subsection{Main theorem}
We are now ready to provide the main theorem of the paper.

\begin{theorem}\label{T2.2}
Assume that the kernels $\phi$ and $\zeta$ are Lipschitz continuous in $\bbt^d$. 
For a given $T>0$, let $\mu,\nu\in L^\infty([0,T]; \mathcal{M}(X))$ be measure valued solutions to \eqref{eq_kinet_TCS} 
with compact supports for each time $t\in[0,T]$.
	Then, there exists $C_T>0$ such that for any $0\le t\le T$, 
	\begin{equation}\label{C-2}
	d(\mu_t,\nu_t)\le C_Td(\mu_0,\nu_0).
	\end{equation}
In particular, consider a mono-kinetic initial datum $f_0(x,v,\theta)=\rho_0(x)\otimes\delta_{u_0(x)}(v)\otimes \delta_{e_0(x)}(\theta)$, where $(\rho_0,u_0,e_0)$ satisfies \eqref{ini-htcs}. 
Then, the kinetic equation \eqref{eq_kinet_TCS} has a unique measure-valued solution
	\[f(t,x,v,\theta)=\rho(t,x)\otimes\delta_{u(t,x)}(v)\otimes \delta_{e(t,x)}(\theta), \quad t\in[0,T], \]
in the class of measure-valued solutions to \eqref{eq_kinet_TCS} with compact supports for all $t\in[0,T]$.
Here, $(\rho,u,e)$ represents the classical solution to the hydrodynamic system \eqref{eq_hydro_TCS} with the initial datum $(\rho_0,u_0,e_0)$. \\
In other words, the mono-kinetic distribution of the solution to \eqref{eq_kinet_TCS} propagates in time.
\end{theorem}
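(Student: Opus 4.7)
The plan is to prove the stability estimate \eqref{C-2} by a Dobrushin-type characteristic argument, and then deduce the mono-kinetic propagation from the uniqueness that \eqref{C-2} provides. The preparatory step is to propagate compact support. Given a measure-valued solution $\mu_t$ with compact support, introduce the characteristic flow
\[Z^\mu_t(z_0)=(X^\mu,V^\mu,\Theta^\mu)(t),\qquad \dot X=V,\quad \dot V=F[\mu_t](X,V,\Theta),\quad \dot\Theta=G[\mu_t](X,\Theta),\]
starting from $z_0$. From the positivity of $\theta$ on the initial support, direct estimates on $\dot\Theta$ trap $\Theta^\mu(t)$ in a compact subset of $(0,\infty)$, and then a Gronwall bound on $\dot V$ confines $V^\mu$ uniformly on $[0,T]$. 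With the support thus uniformly confined, the fields $F[\mu_t]$ and $G[\mu_t]$ become globally Lipschitz in $(x,v,\theta)$, with constants controlled only by the support bounds, $\|\phi\|_{\textup{Lip}}$, and $\|\zeta\|_{\textup{Lip}}$. A standard argument (approximation by empirical measures, or uniqueness for the linear transport equation) then identifies $\mu_t=(Z^\mu_t)_{\#}\mu_0$, so that for any test function $g\in\Omega$ one has
\[\langle\mu_t-\nu_t,g\rangle=\int\bigl[g(Z^\mu_t(z))-g(Z^\nu_t(z))\bigr]\,\nu_0(dz)+\int g(Z^\mu_t(z))\,(\mu_0-\nu_0)(dz).\]

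The heart of the argument is the flow comparison. The second integral is bounded by $e^{CT}d(\mu_0,\nu_0)$ via \eqref{B-1}, using the Lipschitz estimate $\textup{Lip}(Z^\mu_t)\le e^{CT}$ that comes from Gronwall applied to the characteristic ODEs. For the first integral, adding and subtracting $(F,G)[\nu_s](Z^\mu_s)$ in the equation satisfied by $Z^\mu_t-Z^\nu_t$ yields
\[|Z^\mu_t(z)-Z^\nu_t(z)|\le C\int_0^t\bigl(d(\mu_s,\nu_s)+|Z^\mu_s(z)-Z^\nu_s(z)|\bigr)\,ds,\]
since \eqref{B-1} applied to the kernels $z_*\mapsto\phi(x-x_*)(v_*/\theta_*-v/\theta)$ and $z_*\mapsto\zeta(x-x_*)(1/\theta-1/\theta_*)$ (bounded and Lipschitz in $z_*$ on the uniform support) controls $|(F,G)[\mu_s]-(F,G)[\nu_s]|$ by $d(\mu_s,\nu_s)$, while the Lipschitz bounds on $F[\nu_s],G[\nu_s]$ control the remaining difference. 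Gronwall in the characteristic feeds back into the bounded Lipschitz estimate to give
\[d(\mu_t,\nu_t)\le C_T\,d(\mu_0,\nu_0)+C_T\int_0^t d(\mu_s,\nu_s)\,ds,\]
and a second Gronwall yields \eqref{C-2}. The main obstacle throughout is the $1/\theta$ nonlinearity in $F$ and $G$: it forces us to maintain a strictly positive lower bound on the $\theta$-support of both $\mu_t$ and $\nu_t$ uniformly on $[0,T]$, which is precisely what the compact-support hypothesis together with the propagation of the $\Theta$-characteristic supplies.

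For the mono-kinetic propagation, let $(\rho,u,e)$ be the classical solution of \eqref{eq_hydro_TCS} on $[0,T]$ supplied by Proposition \ref{prop:smooth}. I would substitute $f_t=\rho(t,\cdot)\otimes\delta_{u(t,\cdot)}\otimes\delta_{e(t,\cdot)}$ directly into the weak formulation \eqref{eq_measure}: the $v$- and $\theta$-integrations collapse to evaluations at $(u(t,x),e(t,x))$, and the resulting identity for every $g\in C_0^1([0,T)\times X)$ reduces to the weak form of the three hydrodynamic equations of \eqref{eq_hydro_TCS}, which $(\rho,u,e)$ satisfy classically. Consequently $f_t$ is a compactly supported measure-valued solution of \eqref{eq_kinet_TCS} with the prescribed mono-kinetic initial datum, and \eqref{C-2} applied with $\mu_0=\nu_0=f_0$ identifies $f_t$ as the unique such solution in the class of compactly supported measure-valued solutions. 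This is exactly the claimed propagation of the mono-kinetic structure.
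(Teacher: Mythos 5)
Your proposal is correct and follows essentially the same route as the paper: characteristic flows for both measures, the pushforward representation $\mu_t=(Z^\mu_t)_{\#}\mu_0$, a Gr\"onwall comparison of the two flows using the bounded-Lipschitz control of $F[\mu]-F[\nu]$ and $G[\mu]-G[\nu]$ on the uniformly compact support (with the same key observation that $1/\theta_*$ is bounded Lipschitz on $[\theta_m^0,\infty)$), a second Gr\"onwall for $d(\mu_t,\nu_t)$, and finally direct substitution of the mono-kinetic ansatz into the weak formulation to reduce it to the hydrodynamic system. The only cosmetic differences are which measure carries each term in the splitting of $\langle\mu_t-\nu_t,g\rangle$ and your slightly more careful $e^{CT}$ factor on the initial-data term.
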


\begin{remark}
The above theorem also holds in the simpler case of the C-S models \eqref{eq_kinet_CS} and \eqref{eq_hydro_CS}. Indeed, the existence of smooth solutions to \eqref{eq_hydro_CS} as in Proposition \ref{prop:smooth} was proved in \cite{HKK}, and the stability estimate of measure-valued solutions to \eqref{eq_kinet_CS} as in  Proposition \ref{prop:st} was proved in \cite[Proposition 5.10]{H-Liu}. Moreover, the computations in Section \ref{sec:4} also work in the case of the C-S model. 
\end{remark}

\section{Stability of measure-valued solutions}\label{sec:3}
\setcounter{equation}{0}
In this section, we present the stability of measure-valued solutions to the kinetic TCS equation \eqref{eq_kinet_TCS} in terms of the bounded Lipschitz distance. The goal of this section is to prove the following proposition.
\begin{proposition}\label{prop:st}
Let $T>0$ and $\mu,\nu\in L^\infty(0,T; \mathcal{M}(X))$ be measure-valued solutions to \eqref{eq_kinet_TCS} with compact supports for each time $t\in[0,T]$, that is, there exist positive constants $P_T, \theta_m^0, \theta_M^0$ such that
	\begin{equation}\label{supp-ass}
	\mbox{supp}(\mu_t),\mbox{supp}(\nu_t)\subset \bbt^d\times B_{P_T}(0)\times\left[\theta_m^0,\theta_M^0\right],\quad\mbox{for any $0\le t\le T$}.
	\end{equation}
	Then, there exists $C_T>0$ such that for any $0\le t\le T$, 
	\begin{equation}\label{C-2}
	d(\mu_t,\nu_t)\le C_Td(\mu_0,\nu_0).
	\end{equation}
\end{proposition}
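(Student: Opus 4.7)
My plan is to run the classical \emph{characteristic flow} argument for measure-valued solutions of Vlasov-type equations, reducing the stability of $\mu_t,\nu_t$ in bounded Lipschitz distance to a Gronwall inequality driven by the pointwise stability of the two associated flows. The compact support hypothesis \eqref{supp-ass} is essential here: it confines the dynamics to a region where the singular factor $1/\theta$ entering $F[\mu_t],G[\mu_t]$ is controlled by $1/\theta_m^0$, so that the vector fields become globally Lipschitz.

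\textbf{Step 1: characteristics and pushforward.} For each measure-valued solution, define the flow $Z_t^\mu(z_0)=(X_t^\mu,V_t^\mu,\Theta_t^\mu)$ solving
\[
\dot X_t=V_t,\quad \dot V_t=F[\mu_t](X_t,V_t,\Theta_t),\quad \dot \Theta_t=G[\mu_t](X_t,\Theta_t),\quad Z_0=z_0,
\]
and similarly $Z_t^\nu$. Using \eqref{supp-ass}, one checks that $F[\mu_t]$ and $G[\mu_t]$ are bounded and Lipschitz in $(x,v,\theta)$ uniformly on a neighborhood of the support, with constants depending only on $P_T,\theta_m^0,\theta_M^0$, the total masses, and the Lipschitz norms of $\phi,\zeta$. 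Plugging the pushforward $Z_t^\mu\#\mu_0$ into the weak formulation \eqref{eq_measure} and invoking uniqueness for the linear transport equation associated with the fixed Lipschitz vector field $(v,F[\mu_t],G[\mu_t])$ yields $\mu_t=Z_t^\mu\#\mu_0$, and symmetrically $\nu_t=Z_t^\nu\#\nu_0$.

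\textbf{Step 2: comparison of flows.} The integrands in \eqref{FG}, restricted to the supports (so that $\theta_*\ge\theta_m^0$), are uniformly bounded and Lipschitz in $z_*$. Applying \eqref{B-1} to these integrands yields
\[
\|F[\mu_t]-F[\nu_t]\|_{L^\infty}+\|G[\mu_t]-G[\nu_t]\|_{L^\infty}\le C\,d(\mu_t,\nu_t).
\]
Subtracting the ODE systems for $Z_t^\mu(z_0)$ and $Z_t^\nu(z_0)$ and writing
\[
F[\mu_t](Z_t^\mu)-F[\nu_t](Z_t^\nu)=\bigl(F[\mu_t](Z_t^\mu)-F[\mu_t](Z_t^\nu)\bigr)+\bigl(F[\mu_t](Z_t^\nu)-F[\nu_t](Z_t^\nu)\bigr)
\]
(and likewise for $G$), the Lipschitz bound on $F[\mu_t],G[\mu_t]$ from Step 1 together with the above estimate gives
\[
\tfrac{d}{dt}|Z_t^\mu(z_0)-Z_t^\nu(z_0)|\le C|Z_t^\mu(z_0)-Z_t^\nu(z_0)|+C\,d(\mu_t,\nu_t).
\]
Gronwall then produces $\sup_{z_0}|Z_t^\mu(z_0)-Z_t^\nu(z_0)|\le C_T\int_0^t d(\mu_s,\nu_s)\,ds$. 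The same variational estimate applied to the Jacobian of $Z_t^\nu$ shows that $z_0\mapsto Z_t^\nu(z_0)$ is globally Lipschitz with a uniform constant $L_T$.

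\textbf{Step 3: Gronwall closure.} For any $g\in\Omega$, the pushforward identities give
\[
\<\mu_t-\nu_t,g\>=\int_X\bigl(g(Z_t^\mu(z_0))-g(Z_t^\nu(z_0))\bigr)\mu_0(dz_0)+\<\mu_0-\nu_0,g\circ Z_t^\nu\>.
\]
The first term is bounded by $\mu_0(X)\cdot\sup_{z_0}|Z_t^\mu(z_0)-Z_t^\nu(z_0)|\le C_T\int_0^t d(\mu_s,\nu_s)\,ds$ because $g$ is $1$-Lipschitz. The second term is estimated via \eqref{B-1}: since $\|g\circ Z_t^\nu\|_{L^\infty}\le1$ and $\|g\circ Z_t^\nu\|_{\mathrm{Lip}}\le L_T$, it is bounded by $\max(1,L_T)\,d(\mu_0,\nu_0)$. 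Taking the supremum over $g\in\Omega$ gives $d(\mu_t,\nu_t)\le C_T\,d(\mu_0,\nu_0)+C_T\int_0^t d(\mu_s,\nu_s)\,ds$, and a final Gronwall yields \eqref{C-2}. The central technical point, and the only real obstacle, is maintaining the Lipschitz control throughout the region $\theta\ge\theta_m^0>0$: without the support hypothesis \eqref{supp-ass} the singular $1/\theta$ factor would destroy both the flow existence in Step 1 and the comparison estimate in Step 2.
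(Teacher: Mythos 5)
Your overall architecture (characteristic flow, pushforward identity, flow-comparison Gronwall, closure over $g\in\Omega$) is exactly the paper's, and Steps~2 and~3 are sound — indeed your $\max(1,L_T)$ bookkeeping for $\|g\circ Z_t^\nu\|_{\mathrm{Lip}}$ in Step~3 is slightly more careful than the paper's. But there is one genuine gap, and it sits precisely at the point you yourself identify as ``the central technical point'': you never actually prove that the characteristic $\Theta_t$ stays bounded below by a positive constant for all $t\in[0,T]$. Saying that $F[\mu_t],G[\mu_t]$ are Lipschitz ``on a neighborhood of the support'' only gives local-in-time existence; nothing in your argument prevents the trajectory from exiting that neighborhood, with $\Theta_t\downarrow 0$ in finite time, where $1/\theta$ blows up and the flow ceases to exist. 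You cannot deduce the lower bound from \eqref{supp-ass} via the pushforward identity either: that would be circular (the identity $\mu_t=Z_t^\mu\#\mu_0$ is established only after the flow is known to exist globally), and in any case your Steps~2--3 require the flow $Z_t^\nu(z_0)$ for initial points $z_0\in\mathrm{supp}(\mu_0)$ that need not lie in $\mathrm{supp}(\nu_0)$, so support information about $\nu_t$ says nothing about those trajectories.

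The missing ingredient is a direct ODE argument exploiting the sign structure of $G$. The paper (Lemma \ref{lem-all}(2)) shows $\theta_\mu(t)\ge\theta_m^0$ by contradiction: if $t_*$ is the first time $\theta_\mu$ reaches $\theta_m^0-\e$, then shortly before $t_*$ one has $\theta_\mu(\bar t)<\theta_m^0\le\theta_*$ for all $\theta_*$ in $\mathrm{supp}(\mu_{\bar t})$ by \eqref{supp-ass}, hence
\[
G[\mu_{\bar t}]\bigl(x_\mu(\bar t),\theta_\mu(\bar t)\bigr)=\int_X\zeta\bigl(x_\mu(\bar t)-x_*\bigr)\Bigl(\tfrac{1}{\theta_\mu(\bar t)}-\tfrac{1}{\theta_*}\Bigr)\mu_{\bar t}(dz_*)\ge 0,
\]
contradicting (via the mean value theorem) that $\theta_\mu$ must be decreasing somewhere on that interval. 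A companion Gronwall estimate bounds $|v_\mu(t)|$, and a continuation argument then gives global existence of the flow on $[0,T]$. Once you insert this lemma into your Step~1, the rest of your proof goes through as written.
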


\vspace{0.5cm}
 
The proof basically follows the same strategy as in \cite{H-Liu}. We first introduce the following notations for simplicity:
	\begin{align*}
	&a(x,\mu_t):=\int_{X}\phi(x_*-x)\frac{v_*}{\theta_*}\mu_t(dz_*),\quad \rho_\phi(x,\mu_t):=\int_{X}\phi(x_*-x)\mu_t(dz_*),\\
	& b(x,\mu_t):=\int_{X}\zeta(x_*-x)\frac{1}{\theta_*}\mu_t(dz_*),\quad\rho_\zeta(x,\mu_t):=\int_{X}\zeta(x_*-x)\mu_t(dz_*).\\
	\end{align*}
	Then, $F[\mu_t]$ and $G[\mu_t]$ in \eqref{FG} can be written in terms of the above functionals:
	\[F[\mu_t](x,v,\theta)=a(x,\mu_t)-\frac{v}{\theta}\rho_\phi(x,\mu_t),\qquad G[\mu_t](x,\theta)=\frac{1}{\theta}\rho_{\zeta}(x,\mu_t)-b(x,\mu_t).\]

We consider a characteristic curve $(x_\mu(t),v_\mu(t),\theta_\mu(t))=(x_\mu(t;0,x,v,\theta),v_\mu(t;0,x,v,\theta),\theta_\mu(t;0,x,v,\theta))$ associated with the measure $\mu$ as a solution to 
\begin{align}
\begin{aligned}\label{eq_char}
&\frac{dx_\mu(t)}{dt}=v_\mu(t),\quad\frac{dv_\mu(t)}{dt}=F[\mu_t](x_\mu(t),v_\mu(t),\theta_\mu(t)),\quad\frac{d\theta_\mu(t)}{dt}=G[\mu_t](x_\mu(t),\theta_\mu(t)),\quad t>0,\\
&(x_\mu(0),v_\mu(0),\theta_\mu(0))=(x,v,\theta)\in \mbox{supp}\mu_0.
\end{aligned}
\end{align}

Contrary to the C-S model, the above forcing terms $F$ and $G$ of the kinetic TCS model are singular at $\theta=0$. Therefore, the main difficulty is to prevent the temperature trajectory $\theta_\mu(t)$ from vanishing in finite time.

 In the following lemma, we provide the positive lower bound of $\theta_\mu(t)$. We also provide $L^\infty$-bound and Lipschitz continuity for the functionals $a,\rho_\phi, b$ and $\rho_\zeta$, and also the stability of $a, b$ with respect to the input measures.

\begin{lemma}\label{lem-all}
	Let $\mu\in L^\infty(0,T;\mathcal{M}(X))$ be a measure-valued solution to \eqref{eq_kinet_TCS} satisfying \eqref{supp-ass}. Then, the following assertions hold:
	\begin{enumerate}
	\item The total mass is conserved:
	\[
	\int_{X}\mu_t(dz)=\int_{X}\mu_0(dz) =:m_0 <\infty ,\quad \forall t\in [0,T] .
	\]
	\item There exists a unique $C^1$-characteristic curve  
	$(x_\mu(t),v_\mu(t),\theta_\mu(t))$ of \eqref{eq_char} on $[0,T]$ such that for some constant $C_T>0$,
	\[ \theta_\mu(t)\ge \theta_m^0,\qquad |v_\mu(t)| \le C_T,  \qquad 0\le t\le T.\]
	\item The $L^\infty$-norm and Lipschitz constants of the functionals $a,\rho_\phi, b$ and $\rho_\zeta$ are bounded:\\
	for all $x, y\in\bbt^d$ and $t\le T$,
	\begin{align*}
	&|a(x,\mu_t)|\le\frac{\|\phi\|_{L^\infty}  P_T m_0}{\theta_m^0},\quad |a(x,\mu_t)-a(y,\mu_t)|\le\frac{\|\phi\|_{\textup{\textup{Lip}}}P_T m_0}{\theta_m^0}|x-y| , \\
	& |\rho_\phi(x,\mu_t)|\le \|\phi\|_{L^\infty} m_0,\quad |\rho_\phi(x,\mu_t)-\rho_\phi(y,\mu_t)|\le \|\phi\|_{\textup{Lip}} m_0 |x-y|, \\
	&|b(x,\mu_t)|\le \frac{\|\zeta\|_{L^\infty} m_0}{\theta_m^0},\quad |b(x,\mu_t)-b(y,\mu_t)|\le \frac{\|\zeta\|_{\textup{Lip}} m_0}{\theta_m^0}|x-y|\\
	&|\rho_\zeta(x,\mu_t)|\le\|\zeta\|_{L^\infty} m_0 ,\quad |\rho_\zeta(x,\mu_t)-\rho_\zeta(y,\mu_t)|\le \|\zeta\|_{\textup{Lip}} m_0 |x-y|.
	\end{align*}
	\item 
	For any $\mu,\nu\in L^\infty(0,T;\mathcal{M}(X))$ satisfying \eqref{supp-ass},
there exists a positive constant $C_T$ such that
for all $x\in\bbt^d$ and $t\le T$,
\begin{align*}
&|a(x,\mu_t)-a(x,\nu_t)|\le C_Td(\mu_t,\nu_t),\quad |b(x,\mu_t)-b(x,\nu_t)|\le C_Td(\mu_t,\nu_t),\\
&|\rho_\phi(x,\mu_t)-\rho_\phi(x,\nu_t)|\le C_Td(\mu_t,\nu_t),\quad |\rho_\zeta(x,\mu_t)-\rho_\zeta(x,\nu_t)|\le C_Td(\mu_t,\nu_t).
\end{align*}
\end{enumerate}
\end{lemma}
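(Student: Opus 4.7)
The plan is to establish the four assertions in the order (1), (3), (2), (4), since the $L^\infty$ and Lipschitz estimates in (3) are the elementary workhorse used throughout, and (2) is the main technical point because of the singularity of $F,G$ at $\theta=0$.

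For mass conservation in (1), I would substitute into the weak formulation \eqref{eq_measure} a test function of the form $g(x,v,\theta)=\chi(v)\eta(\theta)\in C_c^1([0,T)\times X)$ that equals $1$ on $B_{P_T}(0)\times[\theta_m^0,\theta_M^0]$; all derivatives of $g$ vanish $\mu_t$-a.e.\ by \eqref{supp-ass}, so the right-hand side of \eqref{eq_measure} collapses to zero and $\langle\mu_t,1\rangle=m_0$ for every $t\in[0,T]$. The estimates in (3) are then immediate: under the integrals defining $a,\rho_\phi,b,\rho_\zeta$, the support hypothesis gives $|v_*|\le P_T$ and $1/\theta_*\le 1/\theta_m^0$, while (1) controls the total mass, yielding the $L^\infty$ bounds. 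The Lipschitz bounds come from bounding $|\phi(x_*-x)-\phi(x_*-y)|\le\|\phi\|_{\textup{Lip}}|x-y|$ (and analogously for $\zeta$) pointwise inside the integral.

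The main obstacle is (2), where one must prevent $\theta_\mu$ from vanishing in finite time. I would combine Cauchy--Lipschitz, applied on the open region $\{\theta>0\}$, with a barrier argument. On that region, the right-hand side of \eqref{eq_char} is locally Lipschitz in $(x,v,\theta)$ by (3), so a unique local $C^1$ solution exists. For the positivity of $\theta_\mu$, observe that since $\mu_t$ is supported where $\theta_*\ge\theta_m^0$, one has $b(x,\mu_t)\le\rho_\zeta(x,\mu_t)/\theta_m^0$, and therefore
\[
\frac{d\theta_\mu}{dt}=\frac{\rho_\zeta(x_\mu,\mu_t)}{\theta_\mu}-b(x_\mu,\mu_t)\;\ge\;\rho_\zeta(x_\mu,\mu_t)\left(\frac{1}{\theta_\mu}-\frac{1}{\theta_m^0}\right)\ge 0\quad\text{whenever }\theta_\mu\le\theta_m^0,
\]
using the standing nonnegativity of the communication kernels. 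The ODE comparison principle then forces $\theta_\mu(t)\ge\theta_m^0$ on its maximal interval of existence. The velocity bound follows from
\[
\frac{d}{dt}|v_\mu|^2 \;=\; 2 v_\mu\cdot a(x_\mu,\mu_t) - 2|v_\mu|^2\,\frac{\rho_\phi(x_\mu,\mu_t)}{\theta_\mu} \;\le\; 2|v_\mu|\cdot\frac{\|\phi\|_{L^\infty}P_T m_0}{\theta_m^0},
\]
so Gronwall yields $|v_\mu(t)|\le C_T$ on $[0,T]$, from which global existence of the characteristic on $[0,T]$ follows.

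For the stability (4), I would write $a(x,\mu_t)-a(x,\nu_t)=\langle\mu_t-\nu_t,g_x\rangle$ with $g_x(z_*):=\phi(x_*-x)v_*/\theta_*$; this $g_x$ is neither bounded nor Lipschitz globally on $X$, so the key step is to replace it by the truncated version $\tilde g_x(z_*):=\phi(x_*-x)\chi(v_*,\theta_*)\,v_*/\theta_*$, where $\chi$ is a smooth cutoff equal to $1$ on $B_{P_T}(0)\times[\theta_m^0,\theta_M^0]$ and vanishing where $\theta_*\le\theta_m^0/2$ or $|v_*|\ge 2P_T$. The truncation leaves both $\langle\mu_t,g_x\rangle$ and $\langle\nu_t,g_x\rangle$ unchanged (since both supports lie in $\{\chi\equiv 1\}$), but $\tilde g_x$ is globally bounded and Lipschitz with norms controlled in terms of $P_T,\theta_m^0,\|\phi\|_{L^\infty},\|\phi\|_{\textup{Lip}}$. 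Applying \eqref{B-1} to $\tilde g_x$ then gives $|a(x,\mu_t)-a(x,\nu_t)|\le C_T d(\mu_t,\nu_t)$; the estimates for $b,\rho_\phi,\rho_\zeta$ are obtained by the identical cutoff-and-apply-\eqref{B-1} scheme (indeed, $\rho_\phi$ and $\rho_\zeta$ require no cutoff in $(v_*,\theta_*)$).
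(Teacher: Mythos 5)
Your proposal is correct, and for parts (1), (3) and (4) it follows the paper's route (test function killing the right-hand side of \eqref{eq_measure}; direct support-based estimates; applying \eqref{B-1} to $\phi(x_*-x)v_*/\theta_*$ etc.), being if anything slightly more careful: your explicit cutoff in $(v_*,\theta_*)$ for (1) and (4) is the honest way to reconcile the unboundedness of $1/\theta_*$ near $\theta_*=0$ with the fact that \eqref{B-1} is stated for bounded Lipschitz functions on all of $X$, a point the paper handles only with the remark that the supports lie in $[\theta_m^0,\infty)$. The genuine difference is in part (2), the lower bound $\theta_\mu(t)\ge\theta_m^0$. The paper argues by contradiction through a first-hitting-time construction: it fixes $\e=\theta_m^0-\theta_\mu(t_*)$, bounds the Lipschitz constant $M$ of $\theta_\mu$ on $[0,t_*]$, localizes to the window $(t_*-\frac{\e}{2M},t_*)$, and invokes the mean value theorem to produce a time $\bar t$ at which $\frac{d\theta_\mu}{dt}<0$ while $G[\mu_{\bar t}]\ge 0$. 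You instead observe directly that $b(x,\mu_t)\le\rho_\zeta(x,\mu_t)/\theta_m^0$ on the support, so $\frac{d\theta_\mu}{dt}\ge\rho_\zeta(x_\mu,\mu_t)\bigl(\frac{1}{\theta_\mu}-\frac{1}{\theta_m^0}\bigr)\ge 0$ whenever $\theta_\mu\le\theta_m^0$, and conclude by forward invariance of $[\theta_m^0,\infty)$. Both arguments rest on the same key fact (the sign of $G$ below the threshold, which requires $\zeta\ge 0$ --- an assumption both you and the paper use implicitly), but your barrier version is shorter and avoids the $\e$--$M$ bookkeeping entirely; the paper's version buys nothing extra here. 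Two small points to tidy up: when invoking Cauchy--Lipschitz you should also note that $F[\mu_t]$ and $G[\mu_t]$ are continuous in $t$ (this follows from the weak continuity of $t\mapsto\mu_t$, as the paper records), and your final Gr\"onwall step for $|v_\mu|$ actually gives the stronger linear-in-time bound $|v_\mu(t)|\le P_T+Ct$.
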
	

\begin{proof}
(1) We consider $g\equiv 1$ in \eqref{eq_measure} to show
\[
\int_{X}\mu_t(dz)-\int_{X}\mu_0(dz) = \int_0^t  \<\mu_s, 0\> ds =0 .
\] 
\noindent (2) 
Note that for any compact set $D$ in $\bbt^d\times \bbr^d\times\bbr_+$  properly containing $\bbt^d\times B_{P_T}(0)\times[\theta_m^0,\theta_M^0]$, $F[\mu_t](x,v,\theta)$ is uniform Lipschitz continuous in $D$. Moreover, $F[\mu_t](x,v,\theta)$  is continuous in $t\in [0,T]$ by the weak continuity of $t\mapsto \mu_t$. Likewise, $G[\mu_t](x,\theta)$ satisfies the same properties as above. Thus, the Cauchy-Lipschitz theorem implies that the ODE \eqref{eq_char} has a unique $C^1$-characteristic curve $(x_\mu(t),v_\mu(t),\theta_\mu(t))$ up to a local time $T_*$.
Now, we will show that for a maximal existence time $T_M$ of $(x_\mu(t),v_\mu(t),\theta_\mu(t))$, there exists a constant $C(T_M)>0$ such that
\beq\label{claim-cont}
 \theta_\mu(t)\ge \theta_m^0,\qquad |v_\mu(t)| \le C(T_M),  \qquad 0\le t< T_M .
\eeq
Once we prove \eqref{claim-cont}, then the continuation argument implies the global-in-time existence with the desired estimates.
So it remains to prove \eqref{claim-cont}. 
We may first verify the bounds for $\theta_\mu(t)$ by the contradiction argument. Suppose that there exists $t_*\in(0,T_M)$  such that
$0<\theta_\mu (t_*)<\theta_m^0$.
Let $\e:=\theta_m^0-\theta_\mu(t_*)$. Since $\theta_\mu(0)\in[\theta_m^0,\theta_M^0]$ we suppose that without loss of generality, $t_*$ is the first hitting time of $\theta_\mu$ to $\theta_m^0-\e$:
\[t_*:=\inf\{0< t< T_M~:~\theta_\mu(t)\le\theta_m^0-\e\}.\]
Then, $\inf_{0\le t\le t_*} \theta_\mu (t) =\theta_m^0-\e$, which together with the definition of $G[\mu_t]$ yields
\[M:=\sup_{0\le t\le t_*} \left|G[\mu_t](x_\mu(t),\theta_\mu(t)) \right| \le \frac{\|\zeta\|_{L^\infty}m_0(\theta_M^0+\sup_{0\le t\le t_*}\theta_\mu(t))}{(\theta_m^0-\e)\theta_m^0}.\]
Thus, $\theta_\mu(t)$ is Lipschitz continuous on $[0,t_*]$, and $M$ denotes the Lipschitz constant  of $\theta_\mu(t)$ on $0\le t\le t_*$. 
Therefore, $\theta_\mu(t)\in \left(\theta_m^0-\e,\theta_m^0-\frac{\e}{2}\right]$ for all $t\in(t_*-\frac{\e}{2M},t_*)$. \\
Since $\theta_\mu(t_*)<\theta_\mu(t_*-\frac{\e}{2M})$, 
we use the mean-value theorem to find the time $\bar{t}\in \left(t_*-\frac{\e}{2M},t_*\right)$ such that 
\[G[\mu_{\bar{t}}]\left(x_\mu(\bar{t}),\theta_\mu(\bar{t})\right)=\frac{d\theta_\mu(t)}{dt}\Bigg|_{t=\bar{t}}<0.  \]
However, since $\theta_\mu(\bar{t})\in \left(\theta_m^0-\e,\theta_m^0-\frac{\e}{2}\right]$, it follows from \eqref{supp-ass} that
\[G[\mu_{\bar{t}}]\left(x_\mu(\bar{t}),\theta_\mu(\bar{t})\right)=\int_{X}\zeta\left(x_\mu(\bar{t})-x_*\right)\left(\frac{1}{\theta_\mu(\bar{t})}-\frac{1}{\theta_*}\right)\mu_{\bar{t}}(dz_*)\ge0,\]
which yields contradiction. Therefore,  $\theta_\mu(t)\ge \theta_m^0$ for all $0\le t< T_M$.\\

\noindent The second estimate of \eqref{claim-cont} is straightforwardly obtained as follows: 
since for all $t\in(0,T_M)$,
\begin{align*}
\frac{d}{dt}|v_\mu(t)|^2 &= 2 \int_{X} \phi(x_\mu(t)-x_*)\left(\frac{v_*}{\theta_*}\cdot v_\mu(t) - \frac{|v_\mu(t)|^2 }{ \theta_\mu(t)}\right)\mu_{t}(dz_*) \\
&\le 2 |v_\mu(t)| \frac{\|\phi\|_{L^\infty}P_T m_0}{\theta_0^m} \le |v_\mu(t)|^2 + \left(\frac{\|\phi\|_{L^\infty}P_T m_0}{\theta_0^m}\right)^2,
\end{align*}
the Gr\"onwall's lemma gives the bound of $v_\mu(t)$.

\noindent (3) Using the $L^\infty$-bound and Lipschitz continuity of $\phi$, and the boundedness of support of $\mu$, we have 
	\begin{align*}
	&|a(x,\mu_t)|\le \int_{X}\phi(x_*-x)\left|\frac{v_*}{\theta_*}\right|\mu_t(dz_*)\le \frac{\|\phi\|_{L^\infty}  P_T m_0}{\theta_m^0},\\
	&|a(x,\mu_t)-a(y,\mu_t)|\le\int_{X}|\phi(x_*-x)-\phi(x_*-y)|\left|\frac{v_*}{\theta_*}\right|\mu_t(dz_*)\le\frac{\|\phi\|_{\textup{Lip}} P_T m_0}{\theta_m^0}|x-y| .
	\end{align*}
	Likewise, we obtain the remaining estimates. \\

\noindent (4)
Using \eqref{B-1} and the fact that the Lipschitz constant of product of functions are bounded as:
\[\|fgh\|_{\textup{Lip}}\le \|f\|_{\textup{Lip}}\|g\|_{L^\infty}\|h\|_{L^\infty}+\|f\|_{L^\infty}\|g\|_{\textup{Lip}}\|h\|_{L^\infty}+\|f\|_{L^\infty}\|g\|_{L^\infty}\|h\|_{\textup{Lip}},\]
we have
\begin{align}
\begin{aligned}\label{aa}
	|a(x,\mu_s)-a(x,\nu_s)|&=\left|\int_{X}\phi(x_*-x)\frac{v_*}{\theta_*}(\mu_s-\nu_s)(dz_*)\right|\\
	&\le \max\left\{\frac{\|\phi\|_{L^\infty} P_T}{\theta_m^0},\frac{\|\phi\|_{\textup{Lip}}P_T}{\theta_m^0}+\frac{\|\phi\|_{L^\infty}}{\theta_m^0}+\frac{\|\phi\|_{L^\infty} P_T}{(\theta_m^0)^2}\right\}d(\mu_s,\nu_s),
\end{aligned}
\end{align}
where note that although the map $\theta_*\mapsto\frac{1}{\theta_*}$ is not a bounded Lipschitz function on $[0,\infty)$, it is bounded Lipschitz function on $[\theta_m^0,\infty)$, which includes the temperature supports of $\mu_t$ and $\nu_t$. \\
Similarly, we have
	\begin{align*}
	|b(x,\mu_s)-b(x,\nu_s)|&=\left|\int_{X}\zeta(x_*-x)\frac{1}{\theta_*}(\mu_s-\nu_s)(dz_*)\right|\le \max\left\{\frac{\|\zeta\|_{L^\infty}}{\theta_m^0},\frac{\|\zeta\|_{\textup{Lip}}}{\theta_m^0}+\frac{\|\zeta\|_{L^\infty}}{(\theta_m^0)^2}\right\}d(\mu_s,\nu_s).
	\end{align*}
	For $\rho_\phi$ and $\rho_\zeta$, we directly have
	\begin{align*}
	&|\rho_\phi(x,\mu_s)-\rho_\phi(x,\nu_s)|=\left|\int_{X}\phi(x_*-x)(\mu_s-\nu_s)(dz_*)\right|\le \max\{\|\phi\|_{L^\infty},\|\phi\|_{\textup{Lip}}\}d(\mu_s,\nu_s),\\
	&|\rho_\zeta(x,\mu_s)-\rho_\zeta(x,\nu_s)|=\left|\int_{X}\zeta(x_*-x)(\mu_s-\nu_s)(dz_*)\right|\le \max\{\|\zeta\|_{L^\infty},\|\zeta\|_{\textup{Lip}}\}d(\mu_s,\nu_s).
	\end{align*}
\end{proof}

\vspace{0.5cm}

We now use Lemma \ref{lem-all} to estimate the difference between two characteristic curves respectively associated with two measures $\mu$ and $\nu$. To this end, for any fixed $z=(x,v,\theta) \in \bbt^d\times B_{P_T}(0)\times[\theta_m^0,\theta_M^0]$, we denote the differences between the components of the curves by
\[\Delta_x(t):=x_\mu(t;0,z)-x_\nu(t;0,z),\quad \Delta_v(t):=v_\mu(t;0,z)-v_\nu(t;0,z),\quad \Delta_\theta(t):=\theta_\mu(t;0,z)-\theta_\nu(t;0,z),\]
and the total difference by
\[\Delta_z(t):=|\Delta_x(t)|+|\Delta_v(t)|+|\Delta_\theta(t)|.\]

\begin{lemma}\label{lem-final}
		Let $\mu,\nu\in L^\infty([0,T); \mathcal{M}(X))$ be measure-valued solutions to \eqref{eq_kinet_TCS} satisfying \eqref{supp-ass}.
	Let $(x_\mu(t),v_\mu(t),\theta_\mu(t))$ and $(x_\nu(t),v_\nu(t),\theta_\nu(t))$ be the characteristic curves respectively associated with the two measures $\mu$ and $\nu$.   Then, there exists a constant $C_T>0$ such that
	for any $z\in  \bbt^d\times B_{P_T}(0)\times[\theta_m^0,\theta_M^0]$,
	\[\Delta_z(t)\le C_T\int_0^t d(\mu_\tau,\nu_\tau)\,d\tau,\quad 0\le t\le T.\]
\end{lemma}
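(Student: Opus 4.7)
The plan is to derive a differential inequality for $\Delta_z(t)$ and close it via Grönwall. Since $\Delta_z(0)=0$ (both characteristics start from $z$), subtracting the characteristic ODEs \eqref{eq_char} for $\mu$ and $\nu$ gives immediately $|\dot{\Delta}_x(t)|\le|\Delta_v(t)|\le\Delta_z(t)$, while the hard work lies in estimating
\[
\dot{\Delta}_v(t)=F[\mu_t](x_\mu,v_\mu,\theta_\mu)-F[\nu_t](x_\nu,v_\nu,\theta_\nu),\qquad
\dot{\Delta}_\theta(t)=G[\mu_t](x_\mu,\theta_\mu)-G[\nu_t](x_\nu,\theta_\nu).
\]

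The main idea is to use the decompositions $F[\mu_t]=a(\cdot,\mu_t)-\tfrac{v}{\theta}\rho_\phi(\cdot,\mu_t)$ and $G[\mu_t]=\tfrac{1}{\theta}\rho_\zeta(\cdot,\mu_t)-b(\cdot,\mu_t)$ and then telescope each difference to separate four sources of error: (i) the spatial difference $x_\mu-x_\nu$, handled by the Lipschitz bounds in Lemma \ref{lem-all}(3); (ii) the measure difference $\mu_t-\nu_t$, handled by the stability bounds in Lemma \ref{lem-all}(4); (iii) the velocity difference $v_\mu-v_\nu$; and (iv) the temperature difference $\theta_\mu-\theta_\nu$. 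For instance, I would write
\[
\frac{v_\mu}{\theta_\mu}\rho_\phi(x_\mu,\mu_t)-\frac{v_\nu}{\theta_\nu}\rho_\phi(x_\nu,\nu_t)
=\frac{\Delta_v}{\theta_\mu}\rho_\phi(x_\mu,\mu_t)+v_\nu\Bigl(\tfrac{1}{\theta_\mu}-\tfrac{1}{\theta_\nu}\Bigr)\rho_\phi(x_\mu,\mu_t)+\frac{v_\nu}{\theta_\nu}\bigl[\rho_\phi(x_\mu,\mu_t)-\rho_\phi(x_\nu,\nu_t)\bigr],
\]
and split the last bracket further as $[\rho_\phi(x_\mu,\mu_t)-\rho_\phi(x_\nu,\mu_t)]+[\rho_\phi(x_\nu,\mu_t)-\rho_\phi(x_\nu,\nu_t)]$. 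An analogous telescoping applies to the $a$-piece of $F$ and to both pieces of $G$.

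The potentially delicate step is controlling the singular factor $\tfrac{1}{\theta_\mu}-\tfrac{1}{\theta_\nu}=\tfrac{\Delta_\theta}{\theta_\mu\theta_\nu}$, but here Lemma \ref{lem-all}(2) yields the uniform lower bound $\theta_\mu(t),\theta_\nu(t)\ge\theta_m^0>0$ for $t\in[0,T]$, so this factor is bounded by $C|\Delta_\theta|$. Combined with the uniform bound $|v_\nu(t)|\le C_T$ from Lemma \ref{lem-all}(2) and the $L^\infty$ bounds on $a,b,\rho_\phi,\rho_\zeta$ from Lemma \ref{lem-all}(3), every term is dominated by either $\Delta_z(t)$ or $d(\mu_t,\nu_t)$ up to a constant depending on $T,P_T,\theta_m^0,\theta_M^0,m_0,\|\phi\|_{\mathrm{Lip}},\|\zeta\|_{\mathrm{Lip}}$. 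This gives
\[
|\dot{\Delta}_v(t)|+|\dot{\Delta}_\theta(t)|\le C_T\bigl(\Delta_z(t)+d(\mu_t,\nu_t)\bigr).
\]

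Combining with the trivial bound on $|\dot{\Delta}_x(t)|$, I obtain the scalar inequality
\[
\frac{d}{dt}\Delta_z(t)\le C_T\Delta_z(t)+C_T\,d(\mu_t,\nu_t),\qquad \Delta_z(0)=0.
\]
An application of Grönwall's lemma then yields $\Delta_z(t)\le C_T\int_0^t d(\mu_\tau,\nu_\tau)\,d\tau$ on $[0,T]$ (absorbing the exponential factor $e^{C_T T}$ into a new constant $C_T$), which is the claim. The main obstacle, as noted, is ensuring the $1/\theta$ singularity stays tame, but this is entirely handled by the uniform temperature floor established in Lemma \ref{lem-all}(2); beyond that, the proof reduces to systematic telescoping and invocation of the estimates already assembled in Lemma \ref{lem-all}.
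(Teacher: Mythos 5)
Your proposal is correct and follows essentially the same route as the paper: the same decomposition of $F$ and $G$ into $a,b,\rho_\phi,\rho_\zeta$, the same telescoping of the products using parts (3) and (4) of Lemma \ref{lem-all}, the same use of the temperature floor $\theta_m^0$ to tame the $1/\theta$ factors, and the same Gr\"onwall closure from $\Delta_z(0)=0$.
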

\begin{proof}
First of all, from the definition, $\Delta_x(t)$ and $\Delta_v(t)$, we have
\[\frac{d\Delta_x(\tau)}{d\tau}=\Delta_v(\tau).\]
Moreover, by (3) and (4) of Lemma \ref{lem-all}, there exists a constant $C_T>0$ such that 
	\begin{align*}
	&|a(x_\mu(t),\mu_t)-a(x_\nu(t),\nu_t)|\le C_T(|x_\mu(t)-x_\nu(t)|+d(\mu_t,\nu_t)),\\
	&|b(x_\mu(t),\mu_t)-b(x_\nu(t),\nu_t)|\le C_T(|x_\mu(t)-x_\nu(t)|+d(\mu_t,\nu_t)),\\
	& |\rho_\phi(x_\mu(t),\mu_t)-\rho_\phi(x_\nu(t),\nu_t)|\le C_T (|x_\mu(t)-x_\nu(t)|+d(\mu_t,\nu_t) ),\\
	& |\rho_\zeta(x_\mu(t),\mu_t)-\rho_\zeta(x_\nu(t),\nu_t)|\le C_T(|x_\mu(t)-x_\nu(t)|+d(\mu_t,\nu_t) ).
	\end{align*}
These estimates together with \eqref{eq_char} and Lemma \ref{lem-all} imply that
\begin{align*}
	\frac{d\Delta_v(\tau)}{d\tau}&=\left(a(x_\mu(\tau),\mu_\tau)-\frac{v_\mu(\tau)}{\theta_\mu(\tau)}\rho_\phi(x_\mu(\tau),\mu_\tau)\right)-\left(a(x_\nu(\tau),\nu_\tau)-\frac{v_\nu(\tau)}{\theta_\nu(\tau)}\rho_\phi(x_\nu(\tau),\nu_\tau)\right)\\
	&\le C_T (|\Delta_x(\tau)|+ d(\mu_\tau,\nu_\tau) ) \\
	&\quad +\left|\frac{v_\mu(\tau)}{\theta_\mu(\tau)}-\frac{v_\nu(\tau)}{\theta_\nu(\tau)}\right|\rho_\phi(x_\mu(\tau),\mu_\tau)+\left|\frac{v_\nu(\tau)}{\theta_\nu(\tau)}\right||\rho_\phi(x_\mu(\tau),\mu_\tau)-\rho_\phi(x_\nu(\tau),\nu_\tau)|\\
	&\le C_T (|\Delta_x(\tau)|+ d(\mu_\tau,\nu_\tau) ) \\
		&\quad +\|\phi\|_{L^\infty}m_0\left(\frac{|\Delta_v(\tau)|}{\theta_m^0}+\frac{C_T|\Delta_\theta(\tau)|}{(\theta_m^0)^2}\right)+C_T\left|\frac{C_T}{\theta_m^0}\right|\left(|\Delta_x(\tau)|+d(\mu_\tau,\nu_\tau)\right)\\
	&\le C_T(|\Delta_x(\tau)|+|\Delta_v(\tau)|+|\Delta_\theta(\tau)|)+C_Td(\mu_\tau,\nu_\tau).
	\end{align*}
	Finally, we estimate $\Delta_\theta(\tau)$ as
	\begin{align*}
	\frac{d\Delta_\theta(\tau)}{d\tau}&=\left(\frac{1}{\theta_\mu(\tau)}\rho_\zeta(x_\mu(\tau),\mu_\tau)-b(x_\mu(\tau),\mu_\tau)\right)-\left(\frac{1}{\theta_\nu(\tau)}\rho_\zeta(x_\nu(\tau),\nu_\tau)-b(x_\nu(\tau),\nu_\tau)\right)\\
	&\le\left|\frac{1}{\theta_\mu(\tau)}-\frac{1}{\theta_\nu(\tau)}\right|\rho_\zeta(x_\mu(\tau),\mu_\tau)+\left|\frac{1}{\theta_\nu(\tau)}\right||\rho_\zeta(x_\mu(\tau),\mu_\tau)-\rho_\zeta(x_\nu(\tau),\nu_\tau)|\\
	&\quad +C_T(|\Delta_x(\tau)|+d(\mu_\tau,\nu_\tau))\\
	&\le \|\zeta\|_{L^\infty}m_0\frac{|\Delta_\theta(\tau)|}{(\theta_m^0)^2}+\frac{C_T}{\theta_m^0}\left(|\Delta_x(\tau)|+d(\mu_\tau,\nu_\tau)\right)+C_T(|\Delta_x(\tau)|+d(\mu_\tau,\nu_\tau))\\
	&\le C_T(|\Delta_x(\tau)|+|\Delta_\theta(\tau)|)+C_Td(\mu_\tau,\nu_\tau).
	\end{align*}
	We now collect the estimates for $\Delta_x$, $\Delta_v$ and $\Delta_\theta$ to obtain
	\[\frac{d\Delta_z(\tau)}{d\tau}\le C_T(\Delta_z(\tau)+d(\mu_\tau,\nu_\tau)),\quad \Delta_z(0)=0.\]
	Therefore, the Gr\"onwall's lemma implies
	\[\Delta_z(t)\le C_T \int_0^t d(\mu_\tau,\nu_\tau)\,d\tau,\quad 0\le t\le T.\]
\end{proof}

\vspace{0.5cm}	

\subsection{Proof of Proposition \ref{prop:st}}	
	 Let $g\in \Omega$ be an arbitrary test function. Then, since $\mu_t$ is a pushforward measure of $\mu_0$ by the map $z_\mu(t;0,z):=(x_\mu, v_\mu, \theta_\mu)(t;0,z)$ (see \cite[Lemma 5.5]{H-Liu}), we have
	\[\int_Xg(z)\mu_t(dz)=\int_X g(z_\mu(t;0,z))\mu_0(dz),\]
	and the exactly same equation holds for $\nu$. Thus, using Lemma \ref{lem-final}, 
	\begin{align*}
	\left|\int_{X}g(z)(\mu_t-\nu_t)(dz)\right|&=\left|\int_{X}g(z_\mu(t;0,z))\mu_0(dz)-\int_{X}g(z_\nu(t;0,z))\nu_0(dz)\right|\\
	&\le\int_{X}|g(z_\mu(t;0,z))-g(z_\nu(t;0,z))|\mu_0(dz)+ \left|\int_{X}g(z_\nu(t;0,z))(\mu_0-\nu_0)(dz)\right|\\
	&\le \int_{X}|z_\mu(t;0,z)-z_\nu(t;0,z)|\mu_0(dz) +d(\mu_0,\nu_0)\\
	&\le \int_X \Delta_z(t)\mu_0(dz)+d(\mu_0,\nu_0)\\
	&\le m_0C_T\int_0^t d(\mu_s,\nu_s)\,ds+d(\mu_0,\nu_0) .
	\end{align*}
	Since $g$ was arbitrary in $\Omega$, we have
	\[d(\mu_t,\nu_t)\le m_0C_T\int_0^t d(\mu_s,\nu_s)\,ds+d(\mu_0,\nu_0),\]
	and the Gr\"onwall's inequality implies the desired estimate.

\section{The proof of  Theorem \ref{T2.2}}\label{sec:4}
Since Proposition \ref{prop:st} provides the stability estimate, and consequently, the uniqueness of measure-valued solutions to \eqref{eq_kinet_TCS}, it remains to show that $f(t,x,v,\theta)=\rho(t,x)\otimes\delta_{u(t,x)}(v)\otimes\delta_{e(t,x)}(\theta)$ is a measure-valued solution to \eqref{eq_kinet_TCS}. We verify whether the left- and right- hand sides of \eqref{eq_measure} are equal.\\

\noindent $\bullet$ (Left-hand side of \eqref{eq_measure}): We substitute $\mu_t=\rho(t,x)\, dx\otimes\delta_{u(t,x)}(v)\otimes\delta_{e(t,x)}(\theta)$ to the left-hand side to get
\[
\mbox{L.H.S}=\int_{\bbt^d} \rho(t,x)g(t,x,u(t,x),e(t,x))\,dx-\int_{\bbt^d}\rho_0(x)g(0,x,u(0,x),e(0,x))\,dx .
\]
Since $(\rho, u,e)\in C^1((0,\infty)\times\bbt^d)$ by Proposition \ref{prop:smooth} together with Sobolev embedding, the following computations make sense: using the continuity equation in \eqref{eq_hydro_TCS},
\begin{align*}
\mbox{L.H.S}&=\int_{\bbt^d}\int_0^t \partial_s(\rho(s,x)g(s,x,u(s,x),e(s,x)))\,ds\,dx\\
&=\int_{\bbt^d}\int_0^t (\partial_s\rho(s,x))(g(s,x,u(s,x),e(s,x)))+\rho(s,x)\partial_s(g(s,x,u(s,x),e(s,x)))\,ds\,dx \\
&=\int_{\bbt^d}\int_0^t \rho u \cdot[(\nabla_xg)(s,x,u,e)+(\nabla_x u) (\nabla_v g)(s,x,u,e)+(\partial_\theta g)(s,x,u,e)\nabla_x e]\,ds\,dx\\
&\quad+\int_{\bbt^d}\int_0^t \rho [(\partial_s g)(s,x,u,e)+(\nabla_vg)(s,x,u,e)\cdot\partial_s u + (\partial_\theta g)(s,x,u,e)\partial_s e]\,ds\,dx\\
&=\int_0^t\int_{\bbt^d}\bigg[(\partial_s g)(s,x,u,e)+u(s,x)\cdot (\nabla_xg)(s,x,u,e) +(\partial_su+u\cdot\nabla_x u)(\nabla_v g)(s,x,u,e)\\
&\hspace{2cm}+(\partial_se+u \cdot \nabla_xe)(\partial_\theta g)(s,x,u,e)\bigg]\rho\,dx\,ds .
\end{align*}
Then, using the equations for momentum and energy in \eqref{eq_hydro_TCS}, we have
\begin{align*}
\mbox{L.H.S}
&=\int_0^t\int_{\bbt^d}\bigg[(\partial_s g)(s,x,u(s,x),e(s,x))+u(s,x)\cdot (\nabla_xg)(s,x,u(s,x),e(s,x))\\
&\hspace{1.5cm}+\left(\int_{\bbt^d}\phi(x-x_*)\left(\frac{u(t,x_*)}{e(t,x_*)}-\frac{u(t,x)}{e(t,x)}\right)\rho(t,x_*)\,dx_*\right)\cdot(\nabla_v g)(s,x,u(s,x),e(s,x))\\
&\hspace{1.5cm}+\left(\int_{\bbt^d}\zeta(x-x_*)\left(\frac{1}{e(t,x)}-\frac{1}{e(t,x_*)}\right)\rho(t,x_*)\,dx_*\right)(\partial_\theta g)(s,x,u(s,x),e(s,x))\bigg]\rho(s,x)\,dx\,ds.
\end{align*}

\noindent $\bullet$ (Right-hand side of \eqref{eq_measure}): Since
\[\mbox{R.H.S}=\int_0^t \int_{\bbt^{d}\times\bbr^d\times\bbr_+}\left[\partial_sg +v\cdot\nabla_x g+F[\rho\delta_u\delta_e]\cdot \nabla_v g+G[\rho\delta_u \delta_e]\right]\delta_{u(s,x)}(dv) \otimes \delta_{e(s,x)}(d\theta) \otimes \rho(s,x)\,dx\,ds,\]
together with 
\[F[\rho  \delta_u\delta_e](x,v,\theta)=\int_{\bbt^d}\phi(x-x_*)\left(\frac{u(t,x_*)}{e(t,x_*)}-\frac{v}{\theta}\right)\rho(t,x_*)\,dx_*,\]
and
\[G[\rho  \delta_u\delta_e](x,\theta)=\int_{\bbt^d}\zeta(x-x_*)\left(\frac{1}{\theta}-\frac{1}{e(t,x_*)}\right)\rho(t,x_*)\,dx_*,\]
we have R.H.S $=$ L.H.S. Therefore, the given mono-kinetic distribution is indeed a measure-valued solution to \eqref{eq_kinet_TCS}. Hence, this and Proposition \ref{prop:st} completes the proof of Theorem \ref{T2.2}.

\bibliographystyle{amsplain}

\end{document}